\newtheorem{thm}{Theorem}
\newtheorem{pro}{Proposition}
\theoremstyle{definition}
\newtheorem*{rem}{Remark}
\renewenvironment{proof}[1][\proofname] {\par\pushQED{\qed}\normalfont\topsep6\p@\@plus6\p@\relax\trivlist\item[\hskip\labelsep\bfseries#1\@addpunct{.}]\ignorespaces}{\popQED\endtrivlist\@endpefalse}
\def\[#1\]{\begin{align*}#1\end{align*}}
\newcommand{\R}{\mathbb{R}}
\newcommand{\N}{\mathbb{N}}
\newcommand{\ceq}{\coloneqq}
\newcommand{\lrtx}[1]{\ \text{#1} \ }
\def\I{\mathds{1}}
\begin{document}
\title{Uniform Approximation of Continuous Functions by Nontrivial Simple  Functions}
\author{Yu-Lin  Chou\thanks{Yu-Lin Chou, Institute of Statistics, National Tsing Hua University, Hsinchu 30013, Taiwan,  R.O.C.; Email: \protect\url{y.l.chou@gapp.nthu.edu.tw}.}}
\date{}
\maketitle

\begin{abstract}
We prove that  every nonnegative  continuous real-valued  function on a given compact  metric space is the uniform limit of some  increasing sequence of nonnegative   simple functions being linear combinations of indicators of open sets;   
here the nontriviality is relative to the standard choice(s) of approximating  simple functions for measurable   functions, where one loses control over the indicated  measurable  sets. Thus the standard uniform approximation of bounded nonnegative  measurable real-valued  functions by  increasing nonnegative   simple functions may be improved for nonnegative continuous real-valued functions on compact metric spaces.
There are also some interesting consequences regarding semi-continuous functions and  smooth functions.\\

{\noindent {\bf Keywords:}}
compact metric spaces; 
nonstandard uniform approximation for continuous functions;  semi-continuous functions;  smooth functions\\
{\noindent {\bf  MSC 2020:}} 26A15; 28A20; 26E10  
\end{abstract}

\section{Introduction}
Let $\R_{+} \ceq \{ x \in \R \mid x \geq  0 \}$.
A continuous   $\R_{+}$-valued function on a compact metric space, 
being Borel and bounded then, 
is the uniform limit of some  increasing sequence of  nonnegative simple functions, 
the standard choice(s) (e.g. Theorem 2.10 in  Folland \cite{fol}) of the approximating simple functions utilizing the preimages of half-open intervals under the given function. Such a classical  approximation is silent on whether or not the involved indicator functions are  indicators of open sets.   

The present short communication furnishes an improvement of a uniform approximation for continuous $\R_{+}$-valued functions on a compact metric space by an increasing sequence of nonnegative simple functions; the involved indicators may be chosen such that they are indicators of open sets.
Since simple real-valued   functions are in general not continuous with respect to the standard topology of $\R$,
the relevant existing results (such as Goodstein \cite{g})  regarding uniform convergence of a monotonic sequence of  continuous functions do not (immediately) apply.
Our approximation  thus also adds value from this aspect. 

Another facet of our result is supplying a new proof of the customary uniform approximation for a significant subclass of Borel functions.

Further,
our main result and its proof  imply some interesting results  regarding semi-continuous functions and smooth functions.

\section{Results}
With respect to the intended purpose,
our main result is the slightly stronger 
\begin{thm}
If $\Omega$ is a compact metric space,
if $f: \Omega \to \R_{+}$ is continuous,
and if $(a_{j})_{j \in \N}$ is a vanishing sequence of reals $a_{j} > 0$  such that $\sum_{j \in \N}a_{j}$ diverges, 
then there are some open subsets $G_{1}, G_{2}, \dots$ of $\Omega$ such that $(\sum_{j=1}^{n}a_{j}\I_{G_{j}})_{n \in \N}$ converges uniformly to $f$.
\end{thm}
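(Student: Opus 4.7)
The plan is a greedy recursive construction: starting from $s_0 \ceq 0$ and an LSC function $s_{n-1} \le f$, set
\[
G_n \ceq \operatorname{int}\{x \in \Omega : f(x) - s_{n-1}(x) \ge a_n\},
\]
which is open as the interior of a set, and put $s_n \ceq s_{n-1} + a_n \I_{G_n}$. Since $a_n \I_{G_n} \le f - s_{n-1}$ pointwise, one obtains $s_n \le f$; and $s_n$, being a nonnegative linear combination of indicators of open sets, is again LSC. The partial sums then form a nondecreasing sequence pointwise bounded above by $f$, as required.

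Uniform convergence should then be extracted from pointwise convergence via Dini's theorem. Each $f - s_n$ is USC (continuous minus LSC), nonnegative, and the sequence $(f - s_n)_{n \in \N}$ is pointwise nonincreasing on the compact space $\Omega$; the USC version of Dini's theorem promotes pointwise convergence to the (continuous, hence LSC) limit $0$ to uniform convergence. The task thereby reduces to showing $f(x) - s_n(x) \to 0$ for every $x \in \Omega$.

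For the pointwise statement I would argue by contradiction. Suppose $\delta \ceq \lim_n (f(x_0) - s_n(x_0)) > 0$ for some $x_0$. Since $a_n \to 0$, for all sufficiently large $n$ one has $a_n < \delta \le f(x_0) - s_{n-1}(x_0)$, so $x_0$ belongs to the closed set $F_n \ceq \{f - s_{n-1} \ge a_n\}$. If one can upgrade this to $x_0 \in \operatorname{int}(F_n) = G_n$ for every such $n$, then the increment $a_n$ is added to $s_n(x_0)$ at every step; divergence of $\sum a_n$ then forces $s_n(x_0) \to \infty$, contradicting $s_n(x_0) \le f(x_0) < \infty$.

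The main obstacle is precisely this interior-point step. Although $x_0$ lies in $F_n$, the LSC function $s_{n-1}$ may have jump discontinuities on the boundaries of previously chosen $G_j$'s, so $s_{n-1}$ can fail to stay close to $s_{n-1}(x_0)$ on any neighborhood of $x_0$, and membership in $\operatorname{int}(F_n)$ is not automatic. I expect the remedy to use the uniform continuity of $f$ on the compact $\Omega$ together with the monotone structure of $(s_n)$, perhaps combined with a slightly refined choice of $G_n$ (a shrunken open subset of the interior that reserves some ``room'' near current boundary points for later stages). The hypothesis $\sum a_n = \infty$ should enter here, since it guarantees that arbitrarily many small-coefficient indicators remain available to eventually incorporate $x_0$ into enough $G_n$'s.
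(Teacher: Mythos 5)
Your architecture (greedy under-approximation plus the upper-semicontinuous version of Dini's theorem on the compact space $\Omega$) is the same as the paper's, and the parts you prove are correct: your $G_n$ are open, $s_n\le f$, each $f-s_n$ is USC and nonincreasing, and Dini does reduce the theorem to the pointwise convergence $s_n\to f$. But the step you flag as ``the main obstacle'' is a genuine gap, not a technicality, and with your specific choice $G_n=\operatorname{int}\{f-s_{n-1}\ge a_n\}$ the construction provably fails. Take $\Omega=[0,2]$, $f(x)=x$, $a_j=1/j$. Then $G_1=\operatorname{int}\{x\ge 1\}=(1,2]$ (relative interior in $[0,2]$), so $s_{n-1}\ge s_1=1$ on all of $(1,2]$. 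For any $n\ge 2$, every neighborhood of the point $1$ contains points $y\in(1,1+a_n)$, for which $f(y)-s_{n-1}(y)\le y-1<a_n$; hence $1\notin\operatorname{int}\{f-s_{n-1}\ge a_n\}=G_n$. Since also $1\notin G_1$, we get $s_n(1)=0$ for every $n$ while $f(1)=1$, so there is no pointwise, let alone uniform, convergence. The interior operator permanently discards exactly the boundary points that the divergence of $\sum a_j$ was supposed to rescue, and the failure is not repairable by a local tweak of the same scheme: once a point $x_0$ satisfies $\liminf_{y\to x_0}\bigl(f(y)-s_n(y)\bigr)=0<f(x_0)-s_n(x_0)$, no later open set containing $x_0$ can carry a positive coefficient while keeping the partial sums below $f$.

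For comparison, the paper takes $G_n=\{x:f(x)>a_n+s_{n-1}(x)\}$ with no interior operator and simply asserts that these sets are open. That assertion is precisely the dual of your difficulty, and it is false in general: $f-s_{n-1}$ is only upper semicontinuous, so its strict superlevel sets need not be open. In the same example the paper's recipe gives $G_2=(1/2,1]\cup(3/2,2]$, which is not open at $1$. So you have correctly isolated the real difficulty of the theorem --- choosing open sets that are simultaneously small enough to keep $s_n\le f$ and large enough to capture every point infinitely often --- and neither your interior trick nor the paper's strict-inequality version resolves it. A complete argument would have to use the hypotheses more globally, for instance by splitting $(a_j)$ into infinitely many divergent subseries and spending them on open sets chosen with a built-in margin, rather than by a one-pass greedy selection; as written, your proposal (like the paper's proof) does not establish the pointwise convergence on which the Dini step depends.
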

\begin{proof}
We claim that there are some open subsets $G_{1}, G_{2}, \dots$ of $\Omega$ such that  
\[
f(x) = \sum_{j \in \N}a_{j}\I_{G_{j}}(x)
\]
for all $x \in \Omega$,
i.e. that $f$ is the pointwise limit of the sequence $(\sum_{j=1}^{n}a_{j}\I_{G_{j}})_{n \in \N}$.
Indeed, 
Theorem 2.3.3 in Federer \cite{fed} gives in particular (with a one-line proof sketch)   a   ``harmonic'' representation of $f$ in the desired form with Borel $G_{j}$.  
Theorem 1.12 in Evans and Gariepy \cite{eg} provides a more detailed proof of the Federer's result for the special case where $(a_{j})$ is the harmonic sequence $(j^{-1})$. 
This proof is readily generalizable to cover our $(a_{j})$, 
and a detailed proof of the generalization, 
 adapted from the  proof, 
may be found in the proof of Theorem 1 in Chou \cite{c}.

To prove the claim,
we modify the proof in Chou \cite{c} as follows.   
If $G_{1} \ceq \{ x \in \Omega \mid f(x) > a_{1} \}$,
let $G_{n} \ceq \{ x \in \Omega \mid f(x) > a_{n} +  \sum_{j=1}^{n-1}a_{j}\I_{G_{j}} \}$ for all $n \geq 2$ 
by induction.
Then each $G_{n}$ is open in $\Omega$. 
It then can be shown that $f \geq  \sum_{j \in \N}a_{j}\I_{G_{j}}$ on $\Omega$; 
the assumed finiteness of $f$ and the assumptions on $(a_{j})$ in turn imply the desired pointwise convergence, which proves the claim.

To obtain the desired uniform convergence,
we remark that each $-a_{j}\I_{G_{j}}$ is upper semi-continuous. 
Moreover, a sum of finitely many upper semi-continuous functions with values in $\R$ is  again upper semi-continuous: 
If $g_{1}, g_{2}$ are real-valued and upper semi-continuous, 
and if $b > 0$,
then for every $x$ (in the domain)  there is some open  neighborhood $V$ of $x$,
e.g. $V$ being the intersection of some open neighborhoods of $x$ that are included respectively  in $\{ y \mid g_{1}(y) < g_{1}(x) +  b/2 \}$ and in 
$\{ y \mid g_{2}(y) < g_{2}(x) +  b/2 \}$,  
such that 
 $x \in V \subset \{ y \mid g_{1}(y) + g_{2}(y) < g_{1}(x) + g_{2}(x) + b  \}.$

Since $f$ is in particular upper semi-continuous by assumption,
the function $f - \sum_{j=1}^{n}a_{j}\I_{G_{j}}$ is upper semi-continuous for every $n \in \N$. 
The previously proved   pointwise convergence claim implies that $f - \sum_{j=1}^{n}a_{j}\I_{G_{j}}$ decreases to  $0$ pointwisely; it then follows from a variant of Dini's theorem (e.g. the theorem in  M8 in  Appendix M of  Billingsley \cite{b}) 
that 
\[
f - \sum_{j=1}^{n}a_{j}\I_{G_{j}}
\to 0 \lrtx{uniformly on} \Omega
\] 
as $n \to \infty$.
\end{proof}

\begin{rem}
That part of  proof of Theorem 1 for uniform convergence    is in some sense a new proof;
since the function $f$ in Theorem 1 is bounded,
the uniform convergence also follows  from the inequality 
$f \geq \sum_{j \in \N}a_{j}\I_{G_{j}}$.
But our new proof exploits further properties of the involved functions,
and hence may be potentially useful for other purposes. \qed 
\end{rem}

Theorem 1 and its proof are  also informative in another respect:
\begin{pro}
Every continuous $\R_{+}$-valued function on a compact metric space is the uniform limit of some increasing sequence of lower semi-continuous $\R_{+}$-valued functions.
\end{pro}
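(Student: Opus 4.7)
The plan is to deduce the proposition directly from Theorem 1 by inspecting the structural properties of the approximating sequence produced there. Concretely, given a compact metric space $\Omega$ and a continuous $f: \Omega \to \R_{+}$, I would fix any positive vanishing sequence $(a_{j})_{j \in \N}$ with $\sum_{j} a_{j} = \infty$ (for instance $a_{j} = 1/j$) and invoke Theorem 1 to obtain open sets $G_{1}, G_{2}, \dots$ in $\Omega$ such that $\sum_{j=1}^{n} a_{j} \I_{G_{j}} \to f$ uniformly on $\Omega$. The candidate approximating sequence for the proposition is then $s_{n} \ceq \sum_{j=1}^{n} a_{j} \I_{G_{j}}$.

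Next I would verify the two remaining qualitative properties. First, monotonicity: since $a_{j} > 0$ for every $j$, we have $s_{n+1} - s_{n} = a_{n+1} \I_{G_{n+1}} \geq 0$ pointwise, so $(s_{n})$ is increasing; moreover each $s_{n}$ is $\R_{+}$-valued because it is a nonnegative combination of indicators. Second, lower semi-continuity: since each $G_{j}$ is open in $\Omega$, the indicator $\I_{G_{j}}$ is lower semi-continuous, and multiplication by the positive constant $a_{j}$ preserves this property. Finally, a finite sum of lower semi-continuous real-valued functions is lower semi-continuous; this is dual to the upper semi-continuous statement already established in the proof of Theorem 1, and I would either quote that statement applied to the negatives $-s_{n}$ or repeat the same two-open-neighborhood argument verbatim with the inequalities reversed.

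With these ingredients in place, the proposition is immediate: $(s_{n})$ is an increasing sequence of lower semi-continuous $\R_{+}$-valued functions converging uniformly to $f$, and Theorem 1 supplies exactly this uniform convergence. There is no real obstacle here; the proposition is essentially a reading of Theorem 1 that records the lower semi-continuity of the partial sums, the only small point to be careful about being the invocation of the (easy) fact that finite sums preserve lower semi-continuity.
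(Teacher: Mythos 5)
Your proposal is correct and follows essentially the same route as the paper: take the partial sums $\sum_{j=1}^{n} a_{j}\I_{G_{j}}$ from Theorem 1, observe they increase because each $a_{j}>0$, and check lower semi-continuity via the facts that indicators of open sets are lower semi-continuous and that finite sums of lower semi-continuous real-valued functions are lower semi-continuous (the paper proves the latter by the same two-neighborhood argument you describe). No gaps.
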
  
\begin{proof}
Every indicator function of an open set is lower semi-continuous. 
The sum of finitely many  lower semi-continuous functions with values in $\R$ is lower semi-continuous; indeed, if $g_{1}, g_{2}$ are $\R$-valued and lower semi-continuous, 
and if $b > 0$,
then for every $x$ (in the domain)  there is some open neighborhood $V$ of $x$,  e.g. $V$ being the intersection of some open neighborhoods of $x$ included respectively in 
$\{ y \mid g_{1}(y) > g_{1}(x) - b/2 \}$ and in $\{ y \mid g_{2}(y) > g_{2}(x) - b/2 \}$, 
such that $x \in V \subset \{ y \mid  g_{1}(y) + g_{2}(y) > g_{1}(x) + g_{2}(x) - b \}$.

Let $\sum_{j \in \N}a_{j}\I_{G_{j}}$ be a   uniform representation of $f$ as given by Theorem 1.
Since $f$ is in particular lower semi-continuous by assumption, and since each $a_{j}\I_{G_{j}}$ 
and hence each $\sum_{j=1}^{n}a_{j}\I_{G_{j}}$
is lower semi-continuous, 
the sequence $(\sum_{j=1}^{n}a_{j}\I_{G_{j}})_{n \in \N}$ serves the purpose.
\end{proof}

A part of the proof of Theorem 1 and of that of  Proposition 1 together  suggest a lower bound for nonzero  continuous functions $\R^{n} \to \R_{+}$ in terms of a series of nonzero smooth functions    $\R^{n} \to \R_{+}$:
\begin{pro}
For every nonzero  continuous $f: \R^{n} \to \R_{+}$,
there are some nonzero smooth $g_{1}, g_{2}, \dots : \R^{n} \to \R_{+}$ such that  
i)
$f(x) \geq \sum_{j \in \N}g_{j}(x)$ for all $x \in \R^{n}$
and
ii) $f \geq \sum_{j \in \N}g_{j}$ uniformly on every compact $K \subset \R^{n}$.
\end{pro}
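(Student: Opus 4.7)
The plan is to exhibit $\sum_{j} g_{j}$ as a scalar splitting of a single nonnegative smooth bump lying pointwise below $f$. This reflects the spirit of Theorem 1 and Proposition 1 (a nonnegative continuous function dominating a convergent series of simpler nonnegative pieces) while sidestepping the need to smooth individual indicators.

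First I would use the nonzeroness and continuity of $f$ to pick an $x_{0} \in \R^{n}$ with $f(x_{0}) > 0$ and an $r > 0$ such that $f > f(x_{0})/2$ on the open ball $B(x_{0}, r)$. A standard mollifier-type construction then provides a smooth $\phi: \R^{n} \to [0, f(x_{0})/2]$ supported in $B(x_{0}, r)$ with $\phi(x_{0}) = f(x_{0})/2$. This $\phi$ satisfies $\phi \leq f$ pointwise on $\R^{n}$: on $B(x_{0}, r)$ because $\phi \leq f(x_{0})/2 < f$, and elsewhere because $\phi = 0 \leq f$.

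Next I would fix any positive reals $(a_{j})_{j \in \N}$ with $\sum_{j \in \N} a_{j} = 1$, say $a_{j} \ceq 2^{-j}$, and set $g_{j} \ceq a_{j}\phi$. Each $g_{j}$ is then smooth, $\R_{+}$-valued, and nonzero (since $g_{j}(x_{0}) = a_{j}f(x_{0})/2 > 0$). Item (i) would follow from $\sum_{j \in \N} g_{j} = \phi \leq f$ on $\R^{n}$; for (ii), the partial sums $\sum_{j = 1}^{n} g_{j} = \bigl(\sum_{j = 1}^{n} a_{j}\bigr)\phi$ converge to $\phi$ uniformly on $\R^{n}$, since $\phi$ is bounded, so uniform convergence on every compact $K \subset \R^{n}$ is immediate.

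The only delicate point is ensuring each $g_{j}$ is simultaneously smooth, nonzero, nonnegative, and compatible with the inequality $\sum_{j} g_{j} \leq f$; uniformly scaling a single bump handles all four at once. A more literal adaptation of Theorem 1 would instead apply that theorem to $f$ on a closed ball $B$ to obtain open $G_{j} \subset B$ with $\sum_{j} a_{j}\I_{G_{j}} = f$ on $B$, and then replace each $a_{j}\I_{G_{j}}$ by a smooth approximant supported in the $\R^{n}$-interior of $G_{j}$; but one would then need to handle indices $j$ for which $G_{j}$ fails to have nonempty $\R^{n}$-interior, and to invoke a Dini-type argument as in the proof of Theorem 1 for (ii). The single-bump route cleanly sidesteps both issues.
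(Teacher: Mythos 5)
Your argument is correct for the proposition as literally stated, but it takes a genuinely different and far more elementary route than the paper. The paper reuses the pointwise representation $f=\sum_{j\in\N}a_{j}\I_{G_{j}}$ from the proof of Theorem 1 (which does not need compactness) and then invokes Lemma 1.1.5 of Krantz and Parks, by which every nonzero lower semi-continuous function $\R^{n}\to\R_{+}$ has a nonzero smooth minorant, applied to each summand $a_{j}\I_{G_{j}}$ to get $g_{j}\leq a_{j}\I_{G_{j}}$; uniform convergence on compacts then comes from boundedness or a Dini-type argument. Your single-bump construction $g_{j}\ceq 2^{-j}\phi$ with one mollifier bump $\phi\leq f$ bypasses all of that machinery, and even gives uniform convergence on all of $\R^{n}$, not merely on compact sets. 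What it buys in simplicity it loses in content: since the proposition only asserts a lower bound, a single bump supported in one small ball suffices, but the resulting series has no structural relation to $f$, whereas the paper's $g_{j}$ are tied to the open sets $G_{j}$ of a series that actually \emph{equals} $f$ --- which is evidently the point of placing this proposition after Theorem 1 and Proposition 1. Incidentally, your route also avoids a gap the paper leaves open: if some $G_{j}$ is empty (e.g.\ $G_{1}=\{f>a_{1}\}$ when $\sup f\leq a_{1}$), then $a_{j}\I_{G_{j}}$ is the zero function and admits no nonzero smooth minorant, so the paper's $g_{j}$ cannot all be nonzero without reindexing; your $g_{j}$ are nonzero by construction. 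In short: a valid, cleaner proof of the stated result, though one that trivialises rather than illuminates the intended connection to Theorem 1.
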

\begin{proof}
The proofs of Theorem 1 and  Proposition 1 imply in particular that $f$ is the pointwise limit of some increasing sequence of $\R_{+}$-valued lower semi-continuous functions; the compactness assumption plays a role precisely in asserting uniform convergence. 

Lemma 1.1.5 in Krantz and Parks  \cite{k} asserts in particular that every nonzero lower semi-continuous function $\R^{n} \to \R_{+}$ may be minorized by some nonzero  smooth function $\R^{n} \to \R_{+}$ (pointwisely).

If we   represent $f$ as  $\sum_{j \in \N}a_{j}\I_{G_{j}}$ on $\R^{n}$ by the proof of Theorem 1,
then the lower semi-continuity of each $a_{j}\I_{G_{j}}$ implies that for every $j \in \N$ there is some nonzero smooth $g_{j}: \R^{n} \to \R_{+}$ such that $g_{j} \leq a_{j}\I_{G_{j}}$ on $\R^{n}$;
therefore, we have 
\[
f(x) \geq \sum_{j \in \N}g_{j}(x)
\]  
for all $x \in \R^{n}$.

The other desired  property now  follows 
either from   the boundedness of a continuous function on a compact set or from the proof of the variant of Dini's theorem cited in the proof of Theorem 1.    
\end{proof}


\begin{thebibliography}{9}


\bibitem{b}
Billingsley, P. (1999).
\textit{Convergence of Probability Measures},
second edition. 
Wiley.



\bibitem{c}
Chou, Y.-L. (2020).
Tail probability and divergent series.
arXiv:2004.13541v2 [math.PR].
 

\bibitem{eg} Evans, L. C. and Gariepy, R. F. (2015). 
\textit{Measure Theory and Fine Properties of Functions}, first edition. Chapman {\&} Hall.

\bibitem{fed} Federer, H. (1996).
\textit{Geometric Measure Theory}, reprint of the first edition. Springer.

\bibitem{fol}
Folland, G. B. (2007).
\textit{Real Analysis: Modern Techniques and Their Applications}, second edition.
Wiley. 


\bibitem{g}
Goodstein, R. L. (1946).
A theorem in uniform convergence.
\textit{The Mathematical Gazette}
{\bf 30} 287--290.


\bibitem{k}
Krantz, S.G. and Parks, H.R. (1999).
\textit{The Geometry of Domains in Space}.
Birkh{\"a}user.


\end{thebibliography}
\end{document}